\newtheorem{theorem}{Theorem}
\newtheorem{proposition}[theorem]{Proposition}
\theoremstyle{definition}
\newtheorem{definition}[theorem]{Definition}
\theoremstyle{remark}
\newtheorem{remark}{Remark}
\DeclareMathOperator*{\argmin}{arg\,min} 
\DeclareMathOperator*{\argmax}{arg\,max} 
\DeclareMathOperator*{\argminimax}{arg\,minimax} 
\title{A Bregman firmly nonexpansive proximal operator for baryconvex optimization}
\author{Mastane Achab\thanks{\texttt{mastane@deepgambit.com}}\\
Deep Gambit Limited, Masdar City, Abu Dhabi, UAE\\
\href{https://www.deepgambit.com}{www.deepgambit.com}}
\date{}
\begin{document}

\maketitle

\begin{abstract}
We present a generalization of the proximal operator defined through a convex combination of convex objectives, where the coefficients are updated in a minimax fashion. We prove that this new operator is Bregman firmly nonexpansive with respect to a Bregman divergence that combines Euclidean and information geometries; and that its fixed points are given by the critical points of a certain nonconvex function. Finally, we derive the associated continuous flows.
\end{abstract}

\paragraph{Notations}
The Euclidean norm of any vector $x \in \mathbb{R}^m$ ($m\ge 1$) is denoted $\| x \|$.
For any integer $S\ge 2$, we denote by $\mathbf{1}_S$ the all-ones vector of size $S$ and by $\Delta_S$ the probability simplex:
\begin{equation*}
    \Delta_S = \left\{ q=(q_1,\dots,q_S) \in [0,1]^S : q_1 +\dots +q_S = 1 \right\} \ .
\end{equation*}
The Kullback-Leibler divergence \cite{kullback1951information} will be denoted by ``$D_{\text{KL}}$'' throughout the paper: for any $q, r \in \ring \Delta_S$,
$ D_{\text{KL}}( r \| q )
= \sum_{s=1}^S r_s \log\left( \frac{r_s}{q_s} \right) $.
Let $h(q)=\sum_{s=1}^S q_s \log(q_s)$ be the negative entropy function defined over $\ring \Delta_S$; its gradient $\nabla h(q)=(1+\log(q_s))_s$,
and the softargmax function
\begin{equation*}
\sigma : \xi=(\xi_1,\dots,\xi_S) \in \mathbb{R}^S \mapsto (\nabla h)^{-1}(\xi-\log(\sum_{s=1}^S e^{\xi_s-1})\mathbf{1}_S) .
\end{equation*}
Given a differentiable function $\ell=(\ell_1,\dots,\ell_S) : \mathbb{R}^m \rightarrow \mathbb{R}^S$, we denote by $J_\ell$ its Jacobian matrix.
Finally, given $(x,q) \in \mathbb{R}^m \times \Delta_S$, we refer to the vector $J_\ell(x)^\intercal q = \sum_{s=1}^S q_s \nabla \ell_s(x)$ as the ``$q$-barygradient of $\ell$ at $x$''.

\section{Proximal operator}
\label{sec:intro}

In this article we present a generalization of the convex optimization formalism (\cite{boyd2004convex}) that we call \emph{baryconvex optimization} since it involves weighted convex objectives where the weights are learned in a minimax fashion.

\begin{definition}[Generalized proximal operator]
\label{def:prox}
    Let $\ell = (\ell_1,\dots,\ell_S) : \mathbb{R}^m \rightarrow \mathbb{R}^S$ for $m \ge 1, S \ge 2$ and where $\ell_s$ is a differentiable convex function for each $s \in \{1,\dots,S\}$.
    Given $\lambda >0$, we define our generalized proximal operator $\text{prox}_{\lambda \ell} $ as follows:
    for all $(x,q) \in \mathbb{R}^m \times \ring \Delta_S$
    \begin{equation*}
        \text{prox}_{\lambda \ell}(x,q)
        = \argminimax_{(z,r) \in \mathbb{R}^m \times \ring \Delta_S} H_{x,q}(z,r) :=
    r^\intercal \ell(z) 
    + \frac{1}{2\lambda} \| z-x \|^2 - \frac{1}{\lambda} D_{\text{KL}}( r \| q) \ .
    \end{equation*}
\end{definition}

First notice that in the degenerate case $S=1$, the probability simplex is reduced to the singleton $\Delta_1=\{1\}$, and we recover the standard proximal operator with a single convex loss function whose minimizers are exactly the fixed points of the prox.
This paper proposes to extend well-known convex optimization methods such as the proximal point algorithm (PPA, see \cite{rockafellar1976monotone}) and gradient descent (GD, see \cite{boyd2004convex}) to our general setting with $S\ge 2$.
\begin{quote}
\textit{Question: Can we compute a fixed point (if it exists) of the generalized prox in Definition \ref{def:prox}?}
\end{quote}
As will be shown, the answer provided by this paper is positive.
\begin{quote}
\textit{Answer: Yes, by leveraging a Bregman geometry that combines Euclidean and simplex structures.}
\end{quote}

\paragraph{Saddle point}
We point out that the function $(z,r)\mapsto H_{x,q}(z, r)$ is strongly convex-concave (i.e. strongly convex in $z$ and strongly concave in $r$, see e.g. \cite{boyd2004convex}) and admits a unique saddle point $(x',q')=\text{prox}_{\lambda \ell}(x,q)$ characterized by the stationarity condition 
\begin{equation}
\label{eq:statioCond}
\nabla H_{x,q}(x',q')= \begin{pmatrix} \mathbf{0}_m \\
-\frac{c}{\lambda} \mathbf{1}_S \end{pmatrix} \quad \text{with } c = \log(\sum_s e^{\log(q'_s)-\lambda \ell_s(x')}) -\log(\sum_s e^{\log(q_s)}) .
\end{equation}
%
Further, by the \emph{minimax theorem}\footnote{see e.g. \href{https://en.wikipedia.org/wiki/Minimax_theorem}{wikipedia.org/Minimax\_theorem} or Theorem 7.1 in \cite{cesa2006prediction} }, we have:
\begin{equation}
\label{eq:minimax}
    \min_z \max_{r} H_{x,q}(z,r) = H_{x,q}(x', q' )
    =  \max_{r} \min_z H_{x,q}(z,r) .
\end{equation}

\paragraph{Tensorization}
Consider two generalized proximal operators (with same step-size $\lambda>0$) characterized for $i\in \{1,2\}$ by $\ell^{(i)}:\mathbb{R}^m \rightarrow \mathbb{R}^{S_i}$ with $S_i \ge 2$. We can define the outer sum $\ell^{(1)}\oplus \ell^{(2)} : \mathbb{R}^m \rightarrow \mathbb{R}^{S_1\times S_2}$ as $[\ell^{(1)}\oplus \ell^{(2)}]_{j,k}=\ell^{(1)}_j+\ell^{(2)}_k$, and for probabilities $q^{(i)} \in \Delta_{S_i}$ the outer product $q^{(1)} \otimes q^{(2)} \in \Delta_{S_1\times S_2}$ as $[q^{(1)} \otimes q^{(2)}]_{j,k} = q^{(1)}_j q^{(2)}_k$, for all $(j,k) \in \{1,\dots,S_1\} \times \{1,\dots,S_2\}$.
We have the following property concerning the proximity operator of $\ell^{(1)}\oplus\ell^{(2)}$.



\begin{proposition}[Closure property]
\label{prop:tensorization3}
If
\begin{equation*}
(x',q') = \text{prox}_{\lambda \ell^{(1)}\oplus \ell^{(2)}}(x,q^{(1)}\otimes q^{(2)}) \, ,
\end{equation*}
then there exist $\tilde q^{(i)} \in \Delta_{S_i}$ such that $q' = \tilde q^{(1)} \otimes \tilde q^{(2)}$.
\end{proposition}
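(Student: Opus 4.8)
The plan is to make the distributional component $q'$ of the prox output explicit via the stationarity condition \eqref{eq:statioCond}, and then to notice that $q'$ is a Gibbs-type distribution whose ``energy'' $\lambda\,\ell(x')$ is \emph{separable} across the two index sets when $\ell=\ell^{(1)}\oplus\ell^{(2)}$, so that it automatically factorizes into an outer product.

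First I would read off the form of $q'$. Write $S=S_1S_2$, $q=q^{(1)}\otimes q^{(2)}\in\ring\Delta_S$, and $(x',q')=\text{prox}_{\lambda\ell}(x,q)$. Since $(x',q')$ is a saddle point, $q'$ maximizes $r\mapsto H_{x,q}(x',r)$ over $\ring\Delta_S$, equivalently the $r$-block of $\nabla H_{x,q}(x',q')$ is parallel to $\mathbf{1}_S$ as in \eqref{eq:statioCond}; unwinding $\nabla_r\bigl(r^\intercal\ell(x')-\tfrac{1}{\lambda}D_{\text{KL}}(r\|q)\bigr)$, this forces $\log(q'_s/q_s)-\lambda\,\ell_s(x')$ to be independent of $s$, and normalizing yields
\begin{equation*}
q'_s \;=\; \frac{q_s\,e^{\lambda\,\ell_s(x')}}{\sum_{s'} q_{s'}\,e^{\lambda\,\ell_{s'}(x')}} \;=\; \sigma\bigl(\log q+\lambda\,\ell(x')\bigr)_s , \qquad \log q:=(\log q_s)_s .
\end{equation*}
In particular $q'\in\ring\Delta_S$, so the left-hand side of the Proposition is well posed.

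Next I would substitute the product structure. Indexing $\mathbb{R}^{S_1\times S_2}$ by pairs $(j,k)$, the definitions of $\oplus$ and $\otimes$ give $q_{(j,k)}=q^{(1)}_jq^{(2)}_k$ and $\ell_{(j,k)}(x')=\ell^{(1)}_j(x')+\ell^{(2)}_k(x')$, so the numerator above splits as $q_{(j,k)}\,e^{\lambda\,\ell_{(j,k)}(x')}=\bigl(q^{(1)}_je^{\lambda\,\ell^{(1)}_j(x')}\bigr)\bigl(q^{(2)}_ke^{\lambda\,\ell^{(2)}_k(x')}\bigr)$; summing this identity over all $(j,k)$ shows the normalizing constant equals the product of the two marginal partition sums $Z_i:=\sum_{s}q^{(i)}_se^{\lambda\,\ell^{(i)}_s(x')}$. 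Hence, setting $\tilde q^{(i)}_s:=q^{(i)}_se^{\lambda\,\ell^{(i)}_s(x')}/Z_i$ (a nonnegative vector summing to $1$, thus an element of $\Delta_{S_i}$), the formula for $q'$ collapses to $q'_{(j,k)}=\tilde q^{(1)}_j\,\tilde q^{(2)}_k=[\tilde q^{(1)}\otimes\tilde q^{(2)}]_{(j,k)}$, that is $q'=\tilde q^{(1)}\otimes\tilde q^{(2)}$, as claimed.

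I do not expect a genuine obstacle: the whole argument rests on the single structural observation that \eqref{eq:statioCond} pins $q'$ down as the $q$-tilted softargmax of $\lambda\,\ell(x')$, after which the conclusion is the elementary fact that $\sigma$ maps an outer \emph{sum} of logit vectors to the outer \emph{product} of the corresponding distributions (a Gibbs measure with separable energy is a product measure). The only places that need a little care are bookkeeping: that one and the same point $x'$ feeds both factors $\tilde q^{(1)}$ and $\tilde q^{(2)}$, and that each $\tilde q^{(i)}$ indeed lies in $\Delta_{S_i}$.
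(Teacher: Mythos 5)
Your proposal is correct and follows exactly the route the paper intends: the paper's proof consists of the single remark that the claim is ``straightforward using Eq.~(\ref{eq:statioCond})'', and your argument is precisely the fleshed-out version of that remark --- the stationarity condition pins down $q'_s \propto q_s e^{\lambda \ell_s(x')}$, and the separability of $\ell^{(1)}\oplus\ell^{(2)}$ together with the factorization of $q^{(1)}\otimes q^{(2)}$ makes the Gibbs reweighting factor across the two indices. No gaps; your write-up simply supplies the details the paper omits.
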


The proof of Proposition \ref{prop:tensorization3} is straightforward using Eq. (\ref{eq:statioCond}).
 In other words, the next distribution $q'$ remains an outer product and we deduce that the subset $\mathbb{R}^m \times (\Delta_{S_1}\otimes \Delta_{S_2}) \subseteq \mathbb{R}^m \times \Delta_{S_1\times S_2}$ is closed under $\text{prox}_{\lambda \ell^{(1)}\oplus \ell^{(2)}}$:
 \begin{equation}
     \text{prox}_{\lambda \ell^{(1)}\oplus \ell^{(2)}}(\mathbb{R}^m \times (\Delta_{S_1}\otimes \Delta_{S_2}))
     \subseteq \mathbb{R}^m \times (\Delta_{S_1}\otimes \Delta_{S_2}).
 \end{equation}
 This closure property can even be extended to any multi-index set $\mathcal{T} \subseteq \{1,\dots,S\}^N$ with $[\bigoplus_{k=1}^N \ell^{(k)}]_\tau=\sum_{k=1}^N \ell^{(k)}_{\tau_k}$ and $[\bigotimes_{k=1}^N q^{(k)}]_\tau \propto \prod_{k=1}^N q^{(k)}_{\tau_k}$ for all $\tau=(\tau_1,\dots,\tau_N) \in \mathcal{T}$. In particular, this allows to optimize models that are based on circular convolutions (\cite{achab2022checkered},\cite{achab2023beyond}).

In the next sections, we propose to generalize some key components of the convex analysis toolbox (firm nonexpansion property \cite{bauschkeconvex}, PPA and GD methods) in order to find a fixed point of $\text{prox}_{\lambda \ell}$ in the general case $S\ge 2$.

\section{Bregman firm nonexpansiveness}

We recall from \cite{brohe2000perturbed}-\cite{bauschke2003bregman} that an operator $T$ is Bregman firmly nonexpansive (BFNE) with respect to $f$ if $\langle Tx-Ty, \nabla f(Tx) - \nabla f(Ty) \rangle \le \langle Tx-Ty, \nabla f(x) - \nabla f(y) \rangle$, $\forall x,y$.
Furthermore, if the BFNE operator has a fixed point $x^*=T x^*$, any sequence obtained by recursively applying $T$, namely $x^{k+1}=T x^k$, converges to a fixed point.
Our main result (Theorem \ref{thm:bfne} below) states that our generalized proximal operator introduced in section \ref{sec:intro} is BFNE with respect to a hybrid Bregman divergence mixing the squared Euclidean and the KL divergences.

\begin{definition}[Euclidean+KL Bregman divergence]
\label{def:bregdiv}
    Let the function $f$ be defined for all $(x,q) \in \mathbb{R}^m\times \ring \Delta_S $ as follows:
\begin{equation*}
    f(x, q) = \frac12 \|x\|^2 + h(q)
\end{equation*}
and its corresponding Bregman divergence:
\begin{equation*}
    D_f\left( \begin{pmatrix}
        x\\ q
    \end{pmatrix} , \begin{pmatrix}
        x'\\ q'
    \end{pmatrix} \right)
    = \frac12 \|x-x'\|^2 + D_{\text{KL}}(q\|q').
\end{equation*}
\end{definition}

\begin{theorem}[BFNE]
\label{thm:bfne}
    Let $\text{prox}_{\lambda \ell}$ and $f$ be as defined in Definitions \ref{def:prox} and \ref{def:bregdiv} respectively.
    Then, $\text{prox}_{\lambda \ell}$ is Bregman firmly nonexpansive with respect to $f$.
\end{theorem}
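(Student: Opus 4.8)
The plan is to work directly from the stationarity condition \eqref{eq:statioCond}, which characterizes $(x',q')=\text{prox}_{\lambda\ell}(x,q)$. First I would unpack that condition into its two blocks. The $z$-block reads $\nabla_z H_{x,q}(x',q') = J_\ell(x')^\intercal q' + \frac{1}{\lambda}(x'-x) = \mathbf{0}_m$, which gives the ``Euclidean'' identity
\begin{equation*}
x' = x - \lambda\, J_\ell(x')^\intercal q' .
\end{equation*}
The $r$-block reads $\nabla_r H_{x,q}(x',q') = \ell(x') + \frac{1}{\lambda}\big(\nabla h(q') - \nabla h(q)\big) = -\frac{c}{\lambda}\mathbf{1}_S$, which after rearranging and using $\nabla h(q)_s = 1+\log q_s$ gives the ``information'' identity
\begin{equation*}
\nabla h(q') = \nabla h(q) - \lambda\,\ell(x') - c\,\mathbf{1}_S,
\end{equation*}
i.e. $\log q'_s = \log q_s - \lambda \ell_s(x') - c$ up to the normalizing constant $c$ (the $\mathbf{1}_S$ direction is exactly the tangent-space-orthogonal component that makes $q'$ sum to one). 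Equivalently $q' = \sigma\big(\nabla h(q) - \lambda\ell(x')\big)$ using the softargmax notation, but the clean statement for the proof is that $\nabla h(q') - \nabla h(q)$ is an affine function of $\ell(x')$ plus a multiple of $\mathbf{1}_S$.

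Next I would assemble these into a statement about $\nabla f$. Since $f(x,q) = \frac12\|x\|^2 + h(q)$, we have $\nabla f(x,q) = (x,\ \nabla h(q))$. Writing $(x',q')=\text{prox}_{\lambda\ell}(x,q)$ and $(y',p')=\text{prox}_{\lambda\ell}(y,p)$, the two identities above say exactly
\begin{equation*}
\nabla f(x,q) - \nabla f(x',q') = \begin{pmatrix} \lambda\, J_\ell(x')^\intercal q' \\ \lambda\,\ell(x') + c_x\mathbf{1}_S \end{pmatrix},
\qquad
\nabla f(y,p) - \nabla f(y',p') = \begin{pmatrix} \lambda\, J_\ell(y')^\intercal p' \\ \lambda\,\ell(y') + c_y\mathbf{1}_S \end{pmatrix}.
\end{equation*}
The BFNE inequality to prove is $\langle \nabla f(x',q')-\nabla f(y',p'),\ (x',q')-(y',p')\rangle \le \langle \nabla f(x,q)-\nabla f(y,p),\ (x',q')-(y',p')\rangle$; subtracting, this is equivalent to
\begin{equation*}
\big\langle \big(\nabla f(x,q)-\nabla f(x',q')\big) - \big(\nabla f(y,p)-\nabla f(y',p')\big),\ (x',q')-(y',p') \big\rangle \ge 0 .
\end{equation*}
Plugging in the displayed expressions, the inner product splits into a Euclidean part $\lambda\langle J_\ell(x')^\intercal q' - J_\ell(y')^\intercal p',\ x'-y'\rangle$ and a simplex part $\lambda\langle \ell(x')-\ell(y') + (c_x-c_y)\mathbf{1}_S,\ q'-p'\rangle$. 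The $\mathbf{1}_S$ term drops because $q'-p'$ is orthogonal to $\mathbf{1}_S$ (both are probability vectors). So everything reduces to showing
\begin{equation*}
\big\langle J_\ell(x')^\intercal q' - J_\ell(y')^\intercal p',\ x'-y' \big\rangle + \big\langle \ell(x')-\ell(y'),\ q'-p' \big\rangle \ge 0 .
\end{equation*}

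The final step — and the part I expect to be the main obstacle — is proving this last inequality. I would expand the cross terms: add and subtract $J_\ell(x')^\intercal p'$ (or equivalently $\sum_s q'_s \nabla\ell_s(x') $ versus $\sum_s p'_s\nabla\ell_s(y')$) so that the expression regroups as a sum over $s$ of $q'_s$-weighted and $p'_s$-weighted pieces. Concretely one writes the bracket as
\begin{equation*}
\sum_s q'_s\big[\langle \nabla\ell_s(x'), x'-y'\rangle - (\ell_s(x')-\ell_s(y'))\big] + \sum_s p'_s\big[\langle \nabla\ell_s(y'), y'-x'\rangle - (\ell_s(y')-\ell_s(x'))\big],
\end{equation*}
after the $\mathbf{1}_S$-free terms cancel appropriately; each bracketed quantity is nonnegative by convexity of $\ell_s$ (the first-order inequality $\ell_s(y') \ge \ell_s(x') + \langle\nabla\ell_s(x'), y'-x'\rangle$), and $q'_s, p'_s \ge 0$, so the whole sum is $\ge 0$. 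The delicate bookkeeping is making sure the regrouping is exact — in particular that the mixed terms $\langle \nabla\ell_s(x'), x'-y'\rangle$ etc. pair up correctly and that nothing is left over — so I would carry out that algebra carefully, essentially recognizing the expression as a sum of Bregman-divergence-type nonnegative quantities $q'_s D_{\ell_s}(y',x') + p'_s D_{\ell_s}(x',y') \ge 0$. Once that identity is verified, the theorem follows immediately.
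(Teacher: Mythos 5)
Your overall architecture is exactly the paper's: extract from the stationarity condition \eqref{eq:statioCond} the identities $x-x'=\lambda J_\ell(x')^\intercal q'$ and $\nabla h(q)-\nabla h(q')=\pm\lambda\ell(x')+\text{const}\cdot\mathbf{1}_S$, note that the $\mathbf{1}_S$ component is killed by $q'-p'$, and reduce the BFNE inequality to the monotonicity of the saddle operator $A(x,q)=(J_\ell(x)^\intercal q,\,-\ell(x))$, which follows by summing the first-order convexity inequalities for each $\ell_s$ (the paper's Eq.~(\ref{eq:Amonotone})). So the approach is the same, and the key idea (the weighted-Bregman-divergence regrouping) is present and correct.

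There is, however, a sign slip you should fix. Since $H_{x,q}$ contains $-\frac{1}{\lambda}D_{\text{KL}}(r\|q)$ and is \emph{maximized} over $r$, the optimal $q'$ tilts $q$ toward the \emph{larger} losses: $q'_s\propto q_s e^{+\lambda\ell_s(x')}$, i.e. $\nabla h(q)-\nabla h(q')=-\lambda\ell(x')+\text{const}\cdot\mathbf{1}_S$, not $+\lambda\ell(x')$ as you wrote. Consequently your reduced target should read
\begin{equation*}
\bigl\langle J_\ell(x')^\intercal q'-J_\ell(y')^\intercal p',\,x'-y'\bigr\rangle \;-\;\bigl\langle \ell(x')-\ell(y'),\,q'-p'\bigr\rangle\;\ge\;0 ,
\end{equation*}
with a minus sign on the second term. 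The version with a plus sign that you state is false in general (take $S=2$ and affine $\ell_s$: both terms then equal $(q'_1-p'_1)\langle a_1-a_2,x'-y'\rangle$, so their sum can be negative). Fortunately, the regrouped expression you write in your final step, $\sum_s q'_s D_{\ell_s}(y',x')+\sum_s p'_s D_{\ell_s}(x',y')$, expands to the \emph{correct} (minus-sign) quantity, so your concluding nonnegativity argument is exactly the paper's monotonicity lemma and the proof goes through once the two displays are made consistent.
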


\begin{proof}
For $q\in \ring \Delta_S$, we have by the convexity of $x\mapsto q^\intercal \ell(x)$:
\begin{equation}
\label{eq:cvx_q}
    q^\intercal \ell(z)-q^\intercal \ell(x) \ge q^\intercal J_\ell(x) (z-x)
\end{equation}
and, similarly, for any other $r\in \ring \Delta_S$:
\begin{equation}
\label{eq:cvx_qp}
    r^\intercal \ell(x)-r^\intercal \ell(z) \ge r^\intercal J_\ell(z) (x-z) .
\end{equation}
Then, by summing Eqs. \ref{eq:cvx_q} and \ref{eq:cvx_qp} it holds:
\begin{multline}
\label{eq:Amonotone}
    (J_\ell(z)^\intercal r - J_\ell(x)^\intercal q )^\intercal (z-x)
    \ge q^\intercal \ell(x)-q^\intercal \ell(z)
    + r^\intercal \ell(z) - r^\intercal \ell(x)  \\
    \Longleftrightarrow
    \left\langle \begin{pmatrix}
        z \\ r
    \end{pmatrix} - \begin{pmatrix}
        x \\ q
    \end{pmatrix}  , \begin{pmatrix}
        J_\ell(z)^\intercal r \\ -\ell(z)
    \end{pmatrix} - \begin{pmatrix}
        J_\ell(x)^\intercal q \\ -\ell(x)
    \end{pmatrix}  \right\rangle \ge 0 .
\end{multline}

From the stationarity condition (\ref{eq:statioCond}) satisfied by the saddle point $(x',q')=\text{prox}_{\lambda}(x,q)$ of the function $H_{x,q}$:
\begin{multline}
\label{eq:Hstatio}
    \nabla H_{x,q}(x',q') = \begin{pmatrix}
        \mathbf{0}_m \\
        -\frac{c}{\lambda} \mathbf{1}_S
    \end{pmatrix}
    \Leftrightarrow 
    \begin{cases}
        J_\ell(x')^\intercal q' + \frac{1}{\lambda}(x'-x) = \mathbf{0}_m \\
        \ell(x') - \frac{1}{\lambda}( \nabla h(q') - \nabla h(q) ) = -\frac{c}{\lambda} \mathbf{1}_S
    \end{cases} \\
    \Leftrightarrow
    \begin{cases}
        x = x' + \lambda J_\ell(x')^\intercal q' \\
        \nabla h(q) + c \mathbf{1}_S = \nabla h(q') - \lambda \ell(x') .
    \end{cases}
\end{multline}

We are now ready to prove that $\text{prox}_{\lambda \ell}$ is BFNE w.r.t. $f$. For $x,z \in \mathbb{R}^m$, $q,r\in \ring \Delta_S$ and $(x',q')=\text{prox}_{\lambda \ell}(x,q)$, $(z',r')=\text{prox}_{\lambda \ell}(z,r)$

\begin{multline}
    \langle \text{prox}_{\lambda \ell}(x,q) - \text{prox}_{\lambda \ell}(z,r) , \nabla f(x,q) - \nabla f(z,r) \rangle = \\
    \left\langle
    \begin{pmatrix}
        x' \\
        q' 
    \end{pmatrix} - \begin{pmatrix}
        z'\\
        r'
    \end{pmatrix} \ ,
    \begin{pmatrix}
        x \\
        \nabla h(q) 
    \end{pmatrix} - \begin{pmatrix}
        z\\
        \nabla h(r)
    \end{pmatrix} \right\rangle \\
    =
    \left\langle
    \begin{pmatrix}
        x' - z'\\
        q' - r'
    \end{pmatrix} \ ,
    \begin{pmatrix}
        x'+\lambda J_\ell(x')^\intercal q' - z'- \lambda J_\ell(z')^\intercal r'\\
        \nabla h(q') - \lambda \ell(x') - \nabla h(r') + \lambda \ell(z')
    \end{pmatrix} \right\rangle \\
    =
    \|x'-z'\|^2 +
    \langle q' - r', \nabla h(q') - \nabla h(r') \rangle \\
    + \lambda \langle x'-z' , J_\ell(x')^\intercal q' - J_\ell(z')^\intercal r'  \rangle
    + \lambda \langle q'-r' , - \ell(x') + \ell(z')   \rangle \\
    \ge \|x'-z'\|^2 +
    \langle q' - r', \nabla h(q') - \nabla h(r') \rangle
    = \left\langle \begin{pmatrix}
        x' \\ q'
    \end{pmatrix}
    - \begin{pmatrix}
        z' \\ r'
    \end{pmatrix} , 
    \nabla f(x',q') - \nabla f(z',r') \right\rangle
\end{multline}
where the inequality comes from Eq. (\ref{eq:Amonotone}).

\end{proof}

We highlight that Theorem \ref{thm:bfne} generalizes the fact that the classic proximal operator is firmly nonexpansive, since $D_f$ reduces to the squared Euclidean Bregman divergence in the convex scenario $S=1$. 
Moreover, the next result shows that our prox can also be written as a generalized resolvent.
Indeed, we recall from \cite{eckstein1993nonlinear}-\cite{bauschke2003bregman}-\cite{borwein2011characterization} that an $f$-resolvent is equal to $(\nabla f + \lambda A)^{-1} \circ \nabla f$ for some monotone operator $A$. This definition extends the classic notion of resolvent, namely $(I+\lambda A)^{-1}$ (which corresponds to the particular case $f=\frac{\|\cdot\|^2}{2}$), to a general Bregman divergence $D_f$.
\begin{proposition}[$f$-resolvent]
\label{prop:resolvent}
Consider the notations introduced in Definition \ref{def:prox}.
\begin{enumerate}[(i)]
    \item The operator $A(x,q) = \begin{pmatrix}
    J_\ell(x)^\intercal q \\ -\ell(x)
\end{pmatrix}$ is monotone.
\item Our prox operator is an $f$-resolvent:
    \begin{equation*}
        \text{prox}_{\lambda \ell} = \left(\nabla f + \lambda A - 
        \begin{pmatrix}
            \mathbf{0}_m \\
            \text{LSE}(\nabla h -\lambda \ell) \mathbf{1}_S
        \end{pmatrix}
        \right)^{-1} \circ \left( \nabla f - 
        \begin{pmatrix}
            \mathbf{0}_m \\
            \text{LSE}(\nabla h) \mathbf{1}_S
        \end{pmatrix}
        \right) \ ,
    \end{equation*}
    with $A$ from (i) and $f$ from Definition \ref{def:bregdiv} and $\text{LSE}(\xi)=\log(\sum_s e^{\xi_s-1})$.
\end{enumerate}
\end{proposition}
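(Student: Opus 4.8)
The plan is to treat the two items separately, reusing computations already carried out in the proof of Theorem \ref{thm:bfne}.

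Item (i) needs essentially no new work. Monotonicity of $A$ is precisely the inequality \eqref{eq:Amonotone}: that display asserts $\left\langle \begin{pmatrix} z \\ r \end{pmatrix} - \begin{pmatrix} x \\ q \end{pmatrix}, A(z,r) - A(x,q) \right\rangle \ge 0$ for all $(x,q),(z,r) \in \mathbb{R}^m \times \ring\Delta_S$, which is exactly the definition of a monotone operator; I would just recall that it was obtained using only the convexity of each $\ell_s$.

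For item (ii) I would invoke the standard principle that a resolvent identity is merely a reformulation of a stationarity condition. Fix $(x,q) \in \mathbb{R}^m \times \ring\Delta_S$ and write $B := \nabla f + \lambda A - \begin{pmatrix} \mathbf{0}_m \\ \text{LSE}(\nabla h - \lambda\ell)\mathbf{1}_S \end{pmatrix}$ for the operator inverted on the left. First I would simplify the right-hand operator: since $\nabla h(q) = (1+\log q_s)_s$ and $\sum_s q_s = 1$, we get $\text{LSE}(\nabla h(q)) = \log\sum_s e^{\log q_s} = 0$, so its value at $(x,q)$ is simply $\nabla f(x,q) = (x, \nabla h(q))$. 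Next, setting $(x',q') = \text{prox}_{\lambda\ell}(x,q)$ and using $\nabla f(x',q') = (x',\nabla h(q'))$ together with $A(x',q') = (J_\ell(x')^\intercal q', -\ell(x'))$, I would evaluate $B(x',q')$ blockwise to get $\left( x' + \lambda J_\ell(x')^\intercal q',\ \nabla h(q') - \lambda\ell(x') - \text{LSE}(\nabla h(q') - \lambda\ell(x'))\mathbf{1}_S \right)$. Equating this with $(x, \nabla h(q))$ reproduces exactly the system \eqref{eq:Hstatio}, once the constant $c$ of \eqref{eq:statioCond} is identified with $\text{LSE}(\nabla h(q') - \lambda\ell(x'))$ — which is legitimate precisely because $\text{LSE}(\nabla h(q)) = 0$. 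Hence $B(\text{prox}_{\lambda\ell}(x,q))$ equals the right-hand operator evaluated at $(x,q)$.

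Finally I would check that $B$ is injective along the relevant range, so that the composition $B^{-1} \circ (\cdots)$ is well-defined and equals $\text{prox}_{\lambda\ell}$. This follows from uniqueness of the saddle point: reversing the previous computation, any $(\tilde x, \tilde q)$ with $B(\tilde x, \tilde q) = (x,\nabla h(q))$ satisfies the stationarity condition \eqref{eq:statioCond} for $H_{x,q}$ and so, $H_{x,q}$ being strongly convex-concave, must coincide with its unique saddle point $\text{prox}_{\lambda\ell}(x,q)$; moreover, since the second block of that system is of softmax type, the preimage lands automatically in $\mathbb{R}^m \times \ring\Delta_S$, where $\nabla f$ and $\nabla h$ are defined. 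I expect the only genuinely delicate point to be the bookkeeping of the logarithmic normalizer — getting its sign right, tracking whether its argument involves $q$ or $q'$, and confirming once and for all that $\text{LSE}(\nabla h(\cdot))$ vanishes on the open simplex so that the correction term on the right is inert. Everything else is direct substitution.
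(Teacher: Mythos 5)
Your proposal is correct and follows the same route as the paper, whose proof of Proposition \ref{prop:resolvent} is a one-line appeal to Eq.~(\ref{eq:Amonotone}) for (i) and to the stationarity condition (\ref{eq:statioCond}) for (ii); you simply carry out the substitution in detail, correctly identifying $c=\text{LSE}(\nabla h(q')-\lambda\ell(x'))$ and noting that $\text{LSE}(\nabla h(q))=0$ on $\ring\Delta_S$. The added uniqueness argument via the saddle point is a worthwhile (if implicit in the paper) completion of the inversion step.
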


\begin{proof}
    (i) follows from the inequality in Eq. (\ref{eq:Amonotone}) while (ii) derives from the stationarity condition (Eq. \ref{eq:statioCond}) of the saddle point $(x',q')=\text{prox}_{\lambda \ell}(x,q)$ of the function $H_{x,q}$.
\end{proof}

\paragraph{PPA and fixed points}
Theorem \ref{thm:bfne} implies that the generalized proximal point algorithm $(x^{k+1}, q^{k+1}) = \text{prox}_{\lambda \ell}(x^k,q^k)$ converges to a fixed point $(x^*,q^*)$ of the prox, if there exists any.
Such a fixed point is characterized by:
\begin{equation}
    \label{eq:fixpt}
    (x^*,q^*) = \text{prox}_{\lambda \ell}(x^*,q^*)
    \Leftrightarrow 
    \begin{cases}
        J_\ell(x^*)^\intercal q^* = 0 \\
        q^*_s = \frac{ q^*_s e^{-\lambda \ell_s(x^*)} }{ \sum_{s'} q^*_{s'} e^{-\lambda \ell_{s'}(x^*)} } \quad (\forall 1\le s \le S)
    \end{cases}
\end{equation}
which means that the $q^*$-barygradient of $\ell$ at $x^*$ is equal to zero and that: 
\begin{equation}
\label{eq:fixpt2}
    \ell_1(x^*)=\dots=\ell_S(x^*) .
\end{equation}

\section{Critical points}
\label{sec:crit}

An immediate consequence of Equations \ref{eq:fixpt}-\ref{eq:fixpt2} is that the set of fixed points $\text{Fix}(\text{prox}_{\lambda \ell})$ of our generalized proximal operator $f$-mirrors the set of critical points of the function: $\forall (x,\bar \xi)\in \mathbb{R}^m\times \mathbb{R}^{S-1} $ and $\xi = (\bar \xi,0) \in \mathbb{R}^S$ ,
\begin{equation}
     \bar F(x,\bar \xi) = \sum_{s=1}^{S-1} \bar \sigma_s(\bar \xi) \ell_s(x)  + \left(1-\sum_{s=1}^{S-1} \bar \sigma_s(\bar \xi)\right) \ell_S(x)
     = \sigma(\xi)^\intercal \ell(x) \, ,
\end{equation}
where
\begin{equation}
    \begin{pmatrix}
         \bar \sigma(\bar \xi) \\ \frac{1}{1+\sum_{s=1}^{S-1} e^{\bar \xi_s}}
     \end{pmatrix} = 
     \sigma(\xi) 
     \, .
\end{equation}

Before looking at the critical points of $\bar F$, let us first derive the expression of the Fisher information matrix (FIM) of our family of discrete distributions (see e.g. \cite{amari1998natural},\cite{amari2016information}).
\begin{proposition}[FIM]
    The Fisher information matrix $\mathbf{I}(\bar\xi)$ of the family of distributions $\{\sigma(\xi) : \xi=(\bar \xi,0), \bar\xi \in \mathbb{R}^{S-1} \}$ is given by:
    \begin{equation*}
    \forall 1\le i,j\le S-1, \quad [\mathbf{I}(\bar \xi)]_{i,j} = \delta_{i,j} \bar \sigma_i(\bar \xi) - \bar \sigma_i(\bar \xi) \bar \sigma_j(\bar \xi) ,
\end{equation*}
with $\delta_{i,j}$ denoting Kronecker deltas.
Equivalently,
\begin{equation*}
    \mathbf{I}(\bar \xi) = \text{Diag}(\bar \sigma(\bar \xi)) - \bar \sigma(\bar \xi) \bar \sigma(\bar \xi)^\intercal \quad
    \text{and} \quad \mathbf{I}(\bar \xi)^{-1} = \text{Diag}(\bar \sigma(\bar \xi))^{-1} + \frac{1}{1-\sum_{s=1}^{S-1} \bar \sigma_s(\bar \xi)} \mathbf{1}_{S-1} \mathbf{1}_{S-1}^\intercal .
\end{equation*}
\end{proposition}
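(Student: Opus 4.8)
The plan is to recognize the family $\{\sigma(\xi) : \xi=(\bar\xi,0)\}$ as the standard categorical exponential family in the natural parameters $\bar\xi\in\mathbb{R}^{S-1}$, and to obtain its FIM as the Hessian of the associated log-partition function (equivalently, by directly differentiating the log-likelihood and taking expectations). First I would unwind the definition of $\sigma$: since $\nabla h(q)=(1+\log q_s)_s$, one has $(\nabla h)^{-1}(\eta)_s=e^{\eta_s-1}$, hence $\sigma(\xi)_s = e^{\xi_s}/\sum_{s'} e^{\xi_{s'}}$ is the ordinary softmax. With $\xi=(\bar\xi,0)$ this gives $\bar\sigma_i(\bar\xi)=e^{\bar\xi_i}/\big(1+\sum_{s=1}^{S-1} e^{\bar\xi_s}\big)$ for $i\le S-1$ and $\sigma(\xi)_S = 1/\big(1+\sum_{s=1}^{S-1} e^{\bar\xi_s}\big)$. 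Writing $A(\bar\xi)=\log\big(1+\sum_{s=1}^{S-1} e^{\bar\xi_s}\big)$ for the log-partition function, the log-likelihood of outcome $s$ is $\log\sigma(\xi)_s = \langle\bar\xi,e_s\rangle - A(\bar\xi)$ for $s\le S-1$ (with $e_s$ the $s$-th standard basis vector of $\mathbb{R}^{S-1}$) and $-A(\bar\xi)$ for $s=S$; this is an exponential family, so its FIM equals $\nabla^2 A(\bar\xi)$.

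Next I would compute $\partial_{\bar\xi_i} A(\bar\xi) = \bar\sigma_i(\bar\xi)$ and then, by the quotient rule, $\partial_{\bar\xi_j}\bar\sigma_i(\bar\xi)$ equals $\bar\sigma_i(\bar\xi)-\bar\sigma_i(\bar\xi)\bar\sigma_j(\bar\xi)$ when $i=j$ and $-\bar\sigma_i(\bar\xi)\bar\sigma_j(\bar\xi)$ otherwise, i.e. $[\mathbf{I}(\bar\xi)]_{i,j}=\delta_{i,j}\bar\sigma_i(\bar\xi)-\bar\sigma_i(\bar\xi)\bar\sigma_j(\bar\xi)$, which is exactly the stated entrywise formula; the matrix form $\mathbf{I}(\bar\xi)=\text{Diag}(\bar\sigma(\bar\xi))-\bar\sigma(\bar\xi)\bar\sigma(\bar\xi)^\intercal$ is then immediate. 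As an alternative to invoking the exponential-family identity, I would instead note $\partial_{\bar\xi_i}\log\sigma(\xi)_s$ equals $1-\bar\sigma_i(\bar\xi)$ if $s=i$ and $-\bar\sigma_i(\bar\xi)$ otherwise, and compute $\mathbf{I}(\bar\xi)_{i,j}=\mathbb{E}_{s\sim\sigma(\xi)}[\partial_{\bar\xi_i}\log\sigma(\xi)_s\,\partial_{\bar\xi_j}\log\sigma(\xi)_s]$ directly; this collapses to the same covariance-type expression.

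Finally, for the inverse I would apply the Sherman--Morrison formula to the rank-one perturbation $\text{Diag}(\bar\sigma(\bar\xi))-\bar\sigma(\bar\xi)\bar\sigma(\bar\xi)^\intercal$. Using $\text{Diag}(\bar\sigma(\bar\xi))^{-1}\bar\sigma(\bar\xi)=\mathbf{1}_{S-1}$, the scalar denominator in Sherman--Morrison becomes $1-\bar\sigma(\bar\xi)^\intercal\text{Diag}(\bar\sigma(\bar\xi))^{-1}\bar\sigma(\bar\xi)=1-\mathbf{1}_{S-1}^\intercal\bar\sigma(\bar\xi)=1-\sum_{s=1}^{S-1}\bar\sigma_s(\bar\xi)$, and the numerator $\text{Diag}(\bar\sigma(\bar\xi))^{-1}\bar\sigma(\bar\xi)\bar\sigma(\bar\xi)^\intercal\text{Diag}(\bar\sigma(\bar\xi))^{-1}$ becomes $\mathbf{1}_{S-1}\mathbf{1}_{S-1}^\intercal$, giving precisely $\mathbf{I}(\bar\xi)^{-1}=\text{Diag}(\bar\sigma(\bar\xi))^{-1}+\frac{1}{1-\sum_{s=1}^{S-1}\bar\sigma_s(\bar\xi)}\mathbf{1}_{S-1}\mathbf{1}_{S-1}^\intercal$.

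I do not expect a genuine obstacle; the computation is routine once $\sigma$ is identified with the softmax. The one point deserving a line of care is the nondegeneracy that makes Sherman--Morrison legitimate: $1-\sum_{s=1}^{S-1}\bar\sigma_s(\bar\xi)$ is nothing but $\sigma(\xi)_S=1/\big(1+\sum_{s=1}^{S-1} e^{\bar\xi_s}\big)>0$, which both justifies the inversion step and confirms that $\mathbf{I}(\bar\xi)$ is indeed invertible (positive definite on $\mathbb{R}^{S-1}$).
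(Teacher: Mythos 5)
Your proposal is correct, but your primary route differs from the paper's. The paper computes $[\mathbf{I}(\bar\xi)]_{i,j}$ directly from the definition $\mathbb{E}_{\tau\sim\sigma(\xi)}[\partial_{\bar\xi_i}\log\sigma_\tau(\xi)\,\partial_{\bar\xi_j}\log\sigma_\tau(\xi)]$, using the score formula $\partial_{\bar\xi_i}\log\sigma_\tau(\xi)=\delta_{i,\tau}-\bar\sigma_i(\bar\xi)$ and expanding the product before taking expectations --- this is exactly the ``alternative'' you sketch in passing. Your main argument instead invokes the exponential-family identity $\mathbf{I}=\nabla^2 A$ with $A(\bar\xi)=\log(1+\sum_{s=1}^{S-1}e^{\bar\xi_s})$ and differentiates the softmax twice; this is a touch slicker and dovetails with the paper's later Proposition on Christoffel symbols, which uses precisely the third derivatives of this same potential $\psi=A$. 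Both computations are routine and equally valid. One thing you do that the paper does not: the paper's proof stops at the entrywise formula and never verifies the stated expression for $\mathbf{I}(\bar\xi)^{-1}$, whereas your Sherman--Morrison step (with the observation that the denominator $1-\sum_s\bar\sigma_s(\bar\xi)=\sigma_S(\xi)>0$ guarantees invertibility) actually establishes the full proposition as stated, so your write-up is the more complete of the two.
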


\begin{proof}
    By definition:
    \begin{multline*}
        [\mathbf{I}(\bar \xi)]_{i,j} = \mathbb{E}_{\tau \sim \sigma(\xi)}\left[ \frac{\partial \log \sigma_\tau(\xi)}{\partial \bar \xi_i} \frac{\partial \log \sigma_\tau(\xi)}{\partial \bar \xi_j}   \right]
        = \sum_{\tau=1}^S \sigma_\tau(\xi) \left[ \frac{\partial \log \sigma_\tau(\xi)}{\partial \bar \xi_i} \frac{\partial \log \sigma_\tau(\xi)}{\partial \bar \xi_j}   \right] .
    \end{multline*}
    For all $1\le i\le S-1,1\le \tau\le S$, we have:
    \begin{equation*}
        \frac{\partial \log \sigma_\tau(\xi)}{\partial \bar \xi_i} = \delta_{i,\tau} - \bar \sigma_i(\bar \xi).
    \end{equation*}
    Hence,
    \begin{multline*}
         \frac{\partial \log \sigma_\tau(\xi)}{\partial \bar \xi_i} \frac{\partial \log \sigma_\tau(\xi)}{\partial \bar \xi_j}
         = \left[\delta_{i,\tau} - \bar \sigma_i(\bar \xi)\right]
         \left[\delta_{j,\tau} - \bar \sigma_j(\bar \xi)\right] \\
         = \delta_{i,\tau} \delta_{j,\tau} 
         - \delta_{i,\tau} \bar \sigma_j(\bar \xi)
         - \delta_{j,\tau} \bar \sigma_i(\bar \xi)
         + \bar \sigma_i(\bar \xi) \bar \sigma_j(\bar \xi) ,
    \end{multline*}
    from which we deduce that 
    \begin{equation*}
        [\mathbf{I}(\bar \xi)]_{i,j} = \delta_{i,j} \bar \sigma_i(\bar \xi) - \bar \sigma_i(\bar \xi) \bar \sigma_j(\bar \xi) .
    \end{equation*}
\end{proof}

In particular the FIM defines the Fisher-Rao metric (\cite{rao1992information}).

Given that

\begin{equation}
    \nabla \bar F(x,\bar \xi) = \begin{pmatrix}
        J_\ell(x)^\intercal \sigma(\xi) \\
        \mathbf{I}(\bar \xi) \bar \ell(x)
    \end{pmatrix}
    \quad \text{with} \quad \bar \ell = \begin{pmatrix}
        \ell_1-\ell_S \\ \vdots \\ \ell_{S-1}-\ell_S
    \end{pmatrix} ,
\end{equation}
we can now state our result concerning the set of critical points of $\bar F$.

\begin{theorem} 
\label{thm:fixprox}
Recall that for any $\bar \xi \in \mathbb{R}^{S-1}$, we denote $\xi=(\bar \xi, 0)\in \mathbb{R}^S$. Then,
    \begin{equation*}
    \text{Fix}(\text{prox}_{\lambda \ell}) = 
    \{ (x,\sigma(\xi)) : \nabla \bar F(x,\bar \xi) = 0 \}.
\end{equation*}
\end{theorem}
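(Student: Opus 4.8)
The plan is to establish the set equality by double inclusion, using the characterization of fixed points already derived in Equations (\ref{eq:fixpt})--(\ref{eq:fixpt2}) together with the computed gradient $\nabla \bar F$. The key observation is that the two descriptions encode the same two conditions: vanishing of the $q$-barygradient of $\ell$, and equality of all the losses $\ell_1(x)=\dots=\ell_S(x)$.

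First I would unpack the condition $\nabla \bar F(x,\bar\xi)=0$ using the displayed formula for the gradient. The top block $J_\ell(x)^\intercal \sigma(\xi)=0$ says exactly that the $\sigma(\xi)$-barygradient of $\ell$ at $x$ vanishes. The bottom block is $\mathbf{I}(\bar\xi)\bar\ell(x)=0$. Here I would invoke the explicit inverse of the FIM from the preceding proposition: since $\sigma(\xi)\in\ring\Delta_S$ (all coordinates of $\bar\sigma(\bar\xi)$ lie in $(0,1)$ and their sum is $<1$), the matrix $\mathbf{I}(\bar\xi)=\text{Diag}(\bar\sigma(\bar\xi))-\bar\sigma(\bar\xi)\bar\sigma(\bar\xi)^\intercal$ is invertible, so $\mathbf{I}(\bar\xi)\bar\ell(x)=0$ is equivalent to $\bar\ell(x)=0$, i.e. $\ell_1(x)=\dots=\ell_{S-1}(x)=\ell_S(x)$. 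Thus $\nabla\bar F(x,\bar\xi)=0$ is equivalent to: $J_\ell(x)^\intercal\sigma(\xi)=0$ and all losses are equal at $x$.

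Next, for the inclusion $\subseteq$, take $(x^*,q^*)\in\text{Fix}(\text{prox}_{\lambda\ell})$. By Equation (\ref{eq:fixpt2}) the losses are all equal at $x^*$, so the second fixed-point equation in (\ref{eq:fixpt}) holds automatically (every $s$-th term has the same exponential factor), and it places no constraint on $q^*$ beyond $q^*\in\ring\Delta_S$. Hence any $q^*\in\ring\Delta_S$ can be written as $\sigma(\xi)$ for a suitable $\xi=(\bar\xi,0)$: indeed $\sigma$ restricted to vectors with last coordinate $0$ is a bijection onto $\ring\Delta_S$ (set $\bar\xi_s=\log(q^*_s/q^*_S)$). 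With this $\bar\xi$, the first equation of (\ref{eq:fixpt}), $J_\ell(x^*)^\intercal q^*=0$, becomes the top block of $\nabla\bar F(x^*,\bar\xi)=0$, and equality of losses gives the bottom block; so $(x^*,q^*)=(x^*,\sigma(\xi))$ lies in the right-hand set. For the reverse inclusion $\supseteq$, take $(x,\sigma(\xi))$ with $\nabla\bar F(x,\bar\xi)=0$; by the equivalence established above, $J_\ell(x)^\intercal\sigma(\xi)=0$ and $\ell_1(x)=\dots=\ell_S(x)$, which are precisely the two conditions in (\ref{eq:fixpt})--(\ref{eq:fixpt2}) characterizing a fixed point with $q^*=\sigma(\xi)$.

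The only real subtlety — and the step I would treat most carefully — is the equivalence $\mathbf{I}(\bar\xi)\bar\ell(x)=0 \iff \bar\ell(x)=0$, which relies on invertibility of the FIM; this is where the explicit inverse formula from the FIM proposition is essential, and one should note it is valid precisely because $\bar\sigma(\bar\xi)$ has strictly positive entries summing to strictly less than one (equivalently $\sigma(\xi)\in\ring\Delta_S$), which is guaranteed on the domain under consideration. Everything else is bookkeeping: matching the two pairs of scalar/vector conditions and using that $\bar\xi\mapsto\sigma((\bar\xi,0))$ is a bijection onto the open simplex.
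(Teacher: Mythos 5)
Your proposal is correct and follows essentially the same route as the paper: identify $\nabla\bar F(x,\bar\xi)=0$ with the pair of conditions ``$\sigma(\xi)$-barygradient vanishes'' and ``$\bar\ell(x)\in\ker(\mathbf{I}(\bar\xi))=\{\mathbf{0}_{S-1}\}$'', and match these against the fixed-point characterization in Eqs.~(\ref{eq:fixpt})--(\ref{eq:fixpt2}). The paper writes this as a single chain of equivalences, while you spell out the double inclusion and justify the trivial kernel via the explicit FIM inverse and the bijectivity of $\bar\xi\mapsto\sigma((\bar\xi,0))$ onto $\ring\Delta_S$ --- the same argument with the implicit steps made explicit.
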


\begin{proof}
    We have:
    \begin{multline}
        \nabla \bar F(x,\xi) = \begin{pmatrix}
            J_\ell(x)^\intercal \sigma(\xi) \\ \mathbf{I}(\bar \xi) \bar \ell(x)
        \end{pmatrix} = 0 \Leftrightarrow \begin{cases}
            J_\ell(x)^\intercal \sigma(\xi) =0 \\
            \bar \ell(x) \in \text{ker}(\mathbf{I}(\bar \xi))=\{\mathbf{0}_{S-1}\}
            \end{cases} \\
            \Leftrightarrow (x,\sigma(\xi)) \in \text{Fix}(\text{prox}_{\lambda \ell}).
    \end{multline}
\end{proof}

We now provide a result concerning the critical values of $\bar F$, i.e. the values taken at critical points.
\begin{proposition}
\label{prop:critval}
    All critical values of $\bar F$ are identical.
\end{proposition}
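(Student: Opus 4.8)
The plan is to read off what a critical point of $\bar F$ actually is, using Theorem \ref{thm:fixprox} and the fixed-point characterization in Equations (\ref{eq:fixpt})--(\ref{eq:fixpt2}), and then to run a short two-sided comparison between any two critical points. Concretely, I would first observe that if $(x,\bar\xi)$ is a critical point then, by Theorem \ref{thm:fixprox}, $(x,\sigma(\xi))\in\text{Fix}(\text{prox}_{\lambda\ell})$, so Eq. (\ref{eq:fixpt}) yields two facts: (i) $J_\ell(x)^\intercal\sigma(\xi)=\mathbf{0}_m$, and since $z\mapsto\sigma(\xi)^\intercal\ell(z)$ is convex (a convex combination of the convex $\ell_s$) with gradient $J_\ell(z)^\intercal\sigma(\xi)$, vanishing of this gradient at $x$ means $x$ is a \emph{global} minimizer of $z\mapsto\sigma(\xi)^\intercal\ell(z)$; and (ii) by Eq. (\ref{eq:fixpt2}), $\ell_1(x)=\dots=\ell_S(x)$, a common value I will call $v(x)$, so that $\ell(x)=v(x)\mathbf{1}_S$. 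Combining these, the critical value at $(x,\bar\xi)$ is $\bar F(x,\bar\xi)=\sigma(\xi)^\intercal\ell(x)=v(x)\,\sigma(\xi)^\intercal\mathbf{1}_S=v(x)$.

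Next I would take two arbitrary critical points $(x_1,\bar\xi_1)$ and $(x_2,\bar\xi_2)$ with critical values $v_1=v(x_1)$ and $v_2=v(x_2)$, and compare them. Since $x_2$ minimizes $z\mapsto\sigma(\xi_2)^\intercal\ell(z)$ while $\ell(x_1)=v_1\mathbf{1}_S$, we get
\[
v_2=\sigma(\xi_2)^\intercal\ell(x_2)=\min_{z}\sigma(\xi_2)^\intercal\ell(z)\le \sigma(\xi_2)^\intercal\ell(x_1)=v_1\,\sigma(\xi_2)^\intercal\mathbf{1}_S=v_1 .
\]
Exchanging the roles of the two critical points gives $v_1\le v_2$, hence $v_1=v_2$, which is the claim.

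The only delicate point is the first step: one must genuinely use convexity of each $\ell_s$ to promote the stationarity condition $J_\ell(x)^\intercal\sigma(\xi)=\mathbf{0}_m$ into global optimality of $x$ for the fixed-$\xi$ objective, and one must use interiority $\sigma(\xi)\in\ring\Delta_S$ so that $\mathbf{I}(\bar\xi)$ is invertible and the second block of $\nabla\bar F(x,\bar\xi)=0$ truly forces $\bar\ell(x)=\mathbf{0}_{S-1}$ (equivalently Eq. (\ref{eq:fixpt2})). Once those two structural facts are in hand, the equality of critical values is just the two-line comparison above, so I do not expect any further obstacle.
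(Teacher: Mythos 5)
Your proposal is correct and follows essentially the same route as the paper: both arguments extract from Theorem \ref{thm:fixprox} that each critical $x_i$ globally minimizes $z\mapsto\sigma(\xi_i)^\intercal\ell(z)$ and that $\ell(x_i)$ is a constant vector, and then run the same symmetric two-sided comparison (you apply the constancy to $\ell(x_1)$ where the paper applies it to $\ell(x_2)$, which is an immaterial difference). Your explicit remarks on promoting stationarity to global optimality via convexity and on the invertibility of $\mathbf{I}(\bar\xi)$ are sound and merely make precise what the paper leaves implicit.
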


\begin{proof}
    Assume $(x_1,\bar \xi_1), (x_2,\bar \xi_2) \in \mathbb{R}^{m+S-1}$ are two critical points of $\bar F$: for $i\in\{1,2\}$,
    \begin{equation*}
        \nabla \bar F(x_i,\bar \xi_i) = \begin{pmatrix}
            J_\ell(x_i)^\intercal \sigma(\xi_i) \\
            \mathbf{I}(\bar \xi_i) \bar \ell(x_i)
        \end{pmatrix} = 0 .
    \end{equation*}
    Thus $x_i$ minimizes the convex function $x\mapsto \sigma(\xi_i)^\intercal \ell(x_i)$, and we know from Theorem \ref{thm:fixprox} that
    \begin{equation}
        \ell_1(x_i) = \dots = \ell_S(x_i) .
    \end{equation}
    We deduce that:
    \begin{equation}
    \label{eq:ineqFtilde}
        \bar F(x_1,\bar \xi_1) \le \bar F(x_2,\bar \xi_1) = \sigma(\xi_1)^\intercal \ell(x_2)
        = \sigma(\xi_2)^\intercal \ell(x_2)
        = \bar F(x_2,\bar \xi_2) ,
    \end{equation}
    and similarly that $\bar F(x_2,\bar \xi_2) \le \bar F(x_1,\bar \xi_1)$.
\end{proof}

Proposition \ref{prop:critval} generalizes the property that a convex function has at most a unique critical value.
Furthermore, Eq. (\ref{eq:ineqFtilde}) shows that if there exists $s\in\{1,\dots,S\}$ such that $\ell_s$ is strictly convex, then necessarily $x_1=x_2$.

\paragraph{Riemannian Hessian matrix}
From now on, let us assume that every $\ell_s$ is twice continuously differentiable.
For $p\in \mathbb{R}^{S-1}$, denote by $\mathbf{T}(p) \in \mathbb{R}^{(S-1)\times (S-1)\times (S-1)}$ the tensor derivative of $\mathbf{C}(p) = \text{Diag}(p) - p p^\intercal$ given by
\begin{equation}
    \forall (i,j,k)\in \{1,\dots,S-1\}^3 \ , \quad [\mathbf{T}(p)]_{i,j,k} = \frac{\partial [\mathbf{C}(p)]_{i,j}}{\partial p_k} = \delta_{i,j} \delta_{i,k} - p_j \delta_{i,k} - p_i \delta_{j,k} \, .
\end{equation}
The Hessian matrix of $\bar F$ admits the following expression:
\begin{equation}
\label{eq:EuclHess}
    \nabla^2 \bar F(x,\bar \xi) = \begin{pmatrix}
        \sum_s \sigma_s(\xi) \nabla^2 \ell_s(x) & J_{\bar \ell}(x)^\intercal \mathbf{I}(\bar \xi) \\
        \mathbf{I}(\bar \xi) J_{\bar \ell}(x) & [\mathbf{T}(\bar \sigma(\bar \xi)) \times_2 \bar \ell(x) ] \mathbf{I}(\bar \xi)
    \end{pmatrix} ,
\end{equation}
where the matrix with entries $[\mathbf{T}(p) \times_2 v]_{i,k}=\sum_j [\mathbf{T}(p)]_{i,j,k} v_j$ is the second mode contraction of tensor $\mathbf{T}(p)$ with some vector $v$.

Letting
\begin{equation}
    \mathbf{M}(x,\bar \xi) =
    \begin{pmatrix}
        I_m & \mathbf{0}_{m\times (S-1)} \\
        \mathbf{0}_{(S-1)\times m} & \mathbf{I}(\bar \xi)
    \end{pmatrix}
\end{equation}
be the $x$-Euclidean $\bar \xi$-Fisher-Rao product Riemannian metric,
we deduce that the Riemannian Hessian $\widetilde{\nabla}^2 \bar F$ with respect to $\mathbf{M}$ is given by:
\begin{equation}
\label{eq:RiemHess}
    \widetilde{\nabla}^2 \bar F(x,\bar \xi) = \nabla^2 \bar F(x,\bar \xi)
    -\begin{pmatrix}
        \mathbf{0}_{m\times m} & \mathbf{0}_{m\times (S-1)} \\
        \mathbf{0}_{(S-1)\times m} & \begin{bmatrix}
            \sum_k \Gamma^k_{i,j}(\bar \xi) \frac{\partial \bar F}{\partial \bar \xi_k}(x,\bar \xi)
        \end{bmatrix}_{i,j}
    \end{pmatrix}
\end{equation}
with Christoffel symbols provided below.
\begin{proposition}[Christoffel symbols]
\label{prop:Christoffel}
    \begin{equation*}
    \Gamma^k_{i,j}(\bar\xi) = 
    \frac12 \left\{ \delta_{i,j}\delta_{i,k} - \delta_{i,j} \bar \sigma_i(\bar \xi) - \delta_{i,k} \bar \sigma_j(\bar \xi) - \delta_{j,k} \bar \sigma_i(\bar \xi) + 2 \bar \sigma_i(\bar \xi) \bar \sigma_j(\bar \xi) \right\}.
\end{equation*}
\end{proposition}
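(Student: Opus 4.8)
The plan is to read off the coefficients $\Gamma^k_{i,j}$ from the Koszul formula for the Levi--Civita connection of the Fisher--Rao metric $\mathbf{I}(\bar\xi)$,
\[
\Gamma^k_{i,j}(\bar\xi) = \frac12\sum_{l=1}^{S-1}[\mathbf{I}(\bar\xi)^{-1}]_{k,l}\bigl(\partial_{\bar\xi_i}[\mathbf{I}(\bar\xi)]_{j,l} + \partial_{\bar\xi_j}[\mathbf{I}(\bar\xi)]_{i,l} - \partial_{\bar\xi_l}[\mathbf{I}(\bar\xi)]_{i,j}\bigr),
\]
feeding in the closed form of $\mathbf{I}(\bar\xi)^{-1}$ already established in the FIM proposition, so that the only new input needed is the derivative tensor of the entries of $\mathbf{I}(\bar\xi)$.

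To get that tensor, I would first differentiate the softargmax: writing $\partial_k = \partial/\partial\bar\xi_k$, the same computation as in the proof of the FIM proposition gives $\partial_k\bar\sigma_i(\bar\xi) = \delta_{i,k}\bar\sigma_i(\bar\xi) - \bar\sigma_i(\bar\xi)\bar\sigma_k(\bar\xi)$. The product rule applied to $[\mathbf{I}(\bar\xi)]_{i,j} = \delta_{i,j}\bar\sigma_i - \bar\sigma_i\bar\sigma_j$ then yields
\[
T_{i,j,k} := \partial_k[\mathbf{I}(\bar\xi)]_{i,j} = \delta_{i,j}\delta_{i,k}\bar\sigma_i - \delta_{i,j}\bar\sigma_i\bar\sigma_k - \delta_{i,k}\bar\sigma_i\bar\sigma_j - \delta_{j,k}\bar\sigma_i\bar\sigma_j + 2\bar\sigma_i\bar\sigma_j\bar\sigma_k .
\]
The crucial observation is that $T_{i,j,k}$ is totally symmetric in $(i,j,k)$ --- it is the Amari--Chentsov tensor of this exponential family, i.e.\ the third differential of the cumulant function $\bar\xi\mapsto\log\bigl(1+\sum_{s=1}^{S-1} e^{\bar\xi_s}\bigr)$. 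Thanks to this symmetry the Koszul bracket collapses, $\partial_{\bar\xi_i}[\mathbf{I}]_{j,l} + \partial_{\bar\xi_j}[\mathbf{I}]_{i,l} - \partial_{\bar\xi_l}[\mathbf{I}]_{i,j} = T_{i,j,l}$, and hence $\Gamma^k_{i,j}(\bar\xi) = \tfrac12\sum_{l=1}^{S-1}[\mathbf{I}(\bar\xi)^{-1}]_{k,l}\,T_{i,j,l}$.

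It then remains to insert $[\mathbf{I}(\bar\xi)^{-1}]_{k,l} = \delta_{k,l}/\bar\sigma_k + \bigl(1-\sum_{s=1}^{S-1}\bar\sigma_s\bigr)^{-1}$ and simplify. The diagonal part contributes $\tfrac12\,T_{i,j,k}/\bar\sigma_k$: using $\delta_{i,k}\bar\sigma_i = \delta_{i,k}\bar\sigma_k$ and $\delta_{j,k}\bar\sigma_j = \delta_{j,k}\bar\sigma_k$ one sees that every monomial of $T_{i,j,k}$ carries a factor $\bar\sigma_k$, and dividing it out turns this part into $\tfrac12\{\delta_{i,j}\delta_{i,k} - \delta_{i,j}\bar\sigma_i - \delta_{i,k}\bar\sigma_j - \delta_{j,k}\bar\sigma_i + 2\bar\sigma_i\bar\sigma_j\}$. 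The rank-one part contributes $\tfrac12\bigl(1-\sum_{s}\bar\sigma_s\bigr)^{-1}\sum_{l=1}^{S-1}T_{i,j,l}$, which is evaluated by distributing $\sum_l$ over the five terms of $T$ and using the simplex identity $\sum_{s=1}^{S-1}\bar\sigma_s = 1-\sigma_S(\xi)$ (the last component of $\sigma(\xi)$); the leftover factor $1-\sum_{s}\bar\sigma_s$ cancels the prefactor. Summing the two parts and collecting like terms leads to the announced closed form. I expect the delicate step to be exactly this last one --- contracting the totally symmetric tensor $T$ against the rank-one correction term of $\mathbf{I}(\bar\xi)^{-1}$ and using $\sum_{s=1}^{S-1}\bar\sigma_s + \sigma_S(\xi) = 1$ to make the cancellations go through; I would cross-check the final expression against the symmetry $\Gamma^k_{i,j}=\Gamma^k_{j,i}$ and against the scalar case $S=2$, where $\mathbf{I}(\bar\xi) = \bar\sigma_1(1-\bar\sigma_1)$ and $\Gamma^1_{1,1}(\bar\xi) = \tfrac12\,\partial_{\bar\xi_1}\log\bigl(\bar\sigma_1(1-\bar\sigma_1)\bigr)$.
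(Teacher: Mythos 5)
Your route is essentially the paper's: use the Hessian/exponential-family structure ($\mathbf{I}=\nabla^2\psi$, so the Koszul bracket collapses and $\Gamma_{i,j,l}=\tfrac12 T_{i,j,l}$), then raise the index with the explicit $\mathbf{I}^{-1}$. Your derivative tensor $T_{i,j,k}$ and the collapse argument are correct. The gap is the final contraction, which you flag as the delicate step but only assert rather than carry out --- and it does not ``lead to the announced closed form.'' As you observe, every monomial of $T_{i,j,k}$ carries a factor $\bar\sigma_k$, so the diagonal part $\tfrac12\,T_{i,j,k}/\bar\sigma_k$ \emph{already equals, by itself, the entire right-hand side of the Proposition}. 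The rank-one part is not zero: writing $\Sigma=\sum_{l=1}^{S-1}\bar\sigma_l$, one finds
\begin{equation*}
\sum_{l=1}^{S-1}T_{i,j,l}=(1-\Sigma)\bigl(\delta_{i,j}\bar\sigma_i-2\bar\sigma_i\bar\sigma_j\bigr),
\end{equation*}
so after the factor $1-\Sigma$ cancels it contributes $\tfrac12(\delta_{i,j}\bar\sigma_i-2\bar\sigma_i\bar\sigma_j)$. Summing the two parts gives
\begin{equation*}
\Gamma^k_{i,j}(\bar\xi)=\frac12\bigl\{\delta_{i,j}\delta_{i,k}-\delta_{i,k}\bar\sigma_j(\bar\xi)-\delta_{j,k}\bar\sigma_i(\bar\xi)\bigr\},
\end{equation*}
which is not the displayed formula. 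Your own $S=2$ sanity check exposes this: $\tfrac12\,\partial_{\bar\xi_1}\log\bigl(\bar\sigma_1(1-\bar\sigma_1)\bigr)=\tfrac12(1-2\bar\sigma_1)$, whereas the Proposition evaluates to $\tfrac12(1-\bar\sigma_1)(1-2\bar\sigma_1)$. Had you executed the check you proposed, you would have found that the stated formula itself is off by exactly the dropped rank-one contribution.

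For context, the paper's own proof makes precisely the omission your plan risks: it contracts $\Gamma_{i,j,s}$ against only the $\text{Diag}(\bar\sigma)^{-1}$ part of $\mathbf{I}^{-1}$ and silently discards the $\tfrac{1}{1-\Sigma}\mathbf{1}_{S-1}\mathbf{1}_{S-1}^\intercal$ correction. So your method and intermediate objects are sound; the defect is that the one computation you defer is the one that fails, and a completed version of your argument would disprove the Proposition as stated rather than prove it (with knock-on corrections needed wherever $\Gamma^k_{i,j}$ is used downstream, e.g.\ in the Riemannian Hessian).
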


\begin{proof}
    We consider the distributions $\sigma(\xi)$ that form an exponential family with potential function $\psi(\bar\xi)=\log(1+\sum_{s=1}^{S-1} e^{\bar \xi_s})$ (see page 80 in \cite{amari2012differential}). Recall that $[\mathbf{I}(\bar \xi)]_{i,j}=\frac{\partial^2 \psi}{\partial \bar \xi_i \partial \bar \xi_j} (\bar \xi)$, from which we get the Christoffel symbols of the first kind (see pages 41 and 106 in \cite{amari2012differential}):
    \begin{multline*}
        \Gamma_{i,j,k}(\bar \xi) = \frac12 \frac{\partial^3 \psi}{\partial \bar \xi_i \partial \bar \xi_j \partial \bar \xi_k}(\bar \xi)
        = \frac12 \frac{\partial [\mathbf{I}(\bar \xi)]_{i,j}}{\partial \bar \xi_k} \\
        = \frac12 \left\{ \delta_{i,j} \delta_{i,k} \bar \sigma_i(\bar \xi) -\delta_{i,j} \bar \sigma_i(\bar \xi) \bar \sigma_k(\bar \xi) -\delta_{i,k} \bar \sigma_i(\bar \xi) \bar \sigma_j(\bar \xi) -\delta_{j,k} \bar \sigma_i(\bar \xi) \bar \sigma_j(\bar \xi)
        + 2 \bar \sigma_i(\bar \xi) \bar \sigma_j(\bar \xi) \bar \sigma_k(\bar \xi)
        \right\} .
    \end{multline*}
    We deduce (see e.g. \cite{weisstein2003christoffel}) that
    \begin{multline*}
        \Gamma^k_{i,j}(\bar \xi)
        = \sum_{s=1}^{S-1} [\mathbf{I}(\bar \xi)^{-1}]_{k,s} \Gamma_{i,j,s}(\bar \xi) = \\
         \frac12 \left\{ \delta_{i,j}\delta_{i,k} - \delta_{i,j} \bar \sigma_i(\bar \xi) - \delta_{i,k} \bar \sigma_j(\bar \xi) - \delta_{j,k} \bar \sigma_i(\bar \xi) + 2 \bar \sigma_i(\bar \xi) \bar \sigma_j(\bar \xi) \right\} .
    \end{multline*}
\end{proof}

\begin{remark}[Potential function]
    One easily verifies that the Riemannian Hessian matrix of $(x,\bar \xi) \mapsto \frac12 \|x\|^2 + \psi(\bar \xi)$ with the log-partition function $\psi(\bar \xi)=\log(1+\sum_{s=1}^{S-1} e^{\bar \xi_s})$ is exactly $\mathbf{M}(x,\bar \xi)$. Indeed, the correction term involving the Christoffel symbols vanishes (namely $\sum_k \Gamma^k_{i,j} \frac{\partial \psi}{\partial \bar \xi_k}=0$).
\end{remark}

\begin{proposition}[Riemannian Hessian]
\label{prop:RiemHess}
    The Riemannian Hessian of $\bar F$ with respect to $\mathbf{M}$ is equal to
    \begin{multline*}
    \widetilde{\nabla}^2 \bar F(x,\bar \xi) =
    \begin{pmatrix}
        \sum_s \sigma_s(\xi) \nabla^2 \ell_s(x) & J_{\bar \ell}(x)^\intercal \mathbf{I}(\bar \xi) \\
        \mathbf{I}(\bar \xi) J_{\bar \ell}(x) & \mathbf{H}(x,\bar \xi)
    \end{pmatrix} \, , \\
    \text{where} \quad \mathbf{H} = \frac12 \left\{ \text{Diag}(\bar \sigma)\mathbf{D} - \mathbf{D} \bar \sigma \bar \sigma^\intercal - \bar \sigma \bar \sigma^\intercal \mathbf{D} \right\} \text{ with } \mathbf{D}=\text{Diag}(\bar \ell - \mathbf{1}_{S-1} \bar\sigma^\intercal \bar \ell) \, .
\end{multline*}
\end{proposition}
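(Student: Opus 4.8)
The plan is to use that the metric $\mathbf{M}(x,\bar\xi)$ is a block‑diagonal \emph{product} metric — the identity $I_m$ in the $x$‑coordinates and the Fisher–Rao metric $\mathbf{I}(\bar\xi)$ in the $\bar\xi$‑coordinates — so its Levi‑Civita connection splits accordingly and the only surviving Christoffel symbols are the $\Gamma^k_{i,j}(\bar\xi)$ of Proposition \ref{prop:Christoffel}, with all three indices ranging over $\{1,\dots,S-1\}$. By Eq. (\ref{eq:RiemHess}) this means that the $(x,x)$‑block and the two off‑diagonal blocks of $\widetilde\nabla^2\bar F$ are \emph{unchanged} from the Euclidean Hessian (\ref{eq:EuclHess}), i.e. equal to $\sum_s\sigma_s(\xi)\nabla^2\ell_s(x)$ and $\mathbf{I}(\bar\xi)J_{\bar\ell}(x)$ (and its transpose). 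So the whole statement reduces to identifying the $(\bar\xi,\bar\xi)$‑block of $\widetilde\nabla^2\bar F$ with $\mathbf{H}(x,\bar\xi)$.

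For that block I would first put the Euclidean piece $[\mathbf{T}(\bar\sigma(\bar\xi))\times_2\bar\ell(x)]\mathbf{I}(\bar\xi)$ from (\ref{eq:EuclHess}) into closed matrix form. Contracting the middle index of $\mathbf{T}$ against $\bar\ell$ gives $\mathbf{T}(\bar\sigma)\times_2\bar\ell=\mathbf{D}-\bar\sigma\bar\ell^\intercal$ with $\mathbf{D}=\text{Diag}(\bar\ell-\mathbf{1}_{S-1}\bar\sigma^\intercal\bar\ell)$, exactly the matrix appearing in the statement. Right‑multiplying by $\mathbf{I}(\bar\xi)=\text{Diag}(\bar\sigma)-\bar\sigma\bar\sigma^\intercal$ and using the elementary identities $\mathbf{D}\,\text{Diag}(\bar\sigma)=\text{Diag}(\bar\sigma)\mathbf{D}$ and $\text{Diag}(\bar\sigma)\bar\ell=\mathbf{D}\bar\sigma+(\bar\sigma^\intercal\bar\ell)\bar\sigma$, the scalar $\bar\sigma^\intercal\bar\ell$ cancels and one is left with
\begin{equation*}
[\mathbf{T}(\bar\sigma)\times_2\bar\ell]\,\mathbf{I}(\bar\xi)=\text{Diag}(\bar\sigma)\mathbf{D}-\mathbf{D}\bar\sigma\bar\sigma^\intercal-\bar\sigma\bar\sigma^\intercal\mathbf{D}=2\,\mathbf{H}(x,\bar\xi).
\end{equation*}
It then remains to show that the Christoffel correction block, $\mathbf{G}_{i,j}:=\sum_{k}\Gamma^k_{i,j}(\bar\xi)\,\tfrac{\partial\bar F}{\partial\bar\xi_k}(x,\bar\xi)$, equals $\mathbf{H}$, i.e. exactly half of the Euclidean block.

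The cleanest route for $\mathbf{G}$ is \emph{not} to expand the closed form of $\Gamma^k_{i,j}$, but to use — as in the proof of Proposition \ref{prop:Christoffel} — the identity $\Gamma^k_{i,j}=\sum_s[\mathbf{I}(\bar\xi)^{-1}]_{k,s}\,\Gamma_{i,j,s}$ with first‑kind symbols $\Gamma_{i,j,s}=\tfrac12\,\partial_{\bar\xi_s}[\mathbf{I}(\bar\xi)]_{i,j}$, together with $\nabla_{\bar\xi}\bar F=\mathbf{I}(\bar\xi)\bar\ell(x)$ (from the stated expression of $\nabla\bar F$). The two Fisher matrices then annihilate:
\begin{equation*}
\mathbf{G}_{i,j}=\sum_{k,s}\Gamma_{i,j,s}\,[\mathbf{I}(\bar\xi)^{-1}]_{k,s}\,[\mathbf{I}(\bar\xi)\bar\ell]_k=\sum_s\Gamma_{i,j,s}\,[\mathbf{I}(\bar\xi)^{-1}\mathbf{I}(\bar\xi)\bar\ell]_s=\sum_s\Gamma_{i,j,s}\,\bar\ell_s=\tfrac12\sum_s\bar\ell_s\,\partial_{\bar\xi_s}[\mathbf{I}(\bar\xi)]_{i,j},
\end{equation*}
so $\mathbf{G}$ is one half of the directional derivative $D_{\bar\ell}\mathbf{I}(\bar\xi)$ of the Fisher matrix along $\bar\ell$. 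That directional derivative I would compute directly from $\mathbf{I}(\bar\xi)=\text{Diag}(\bar\sigma)-\bar\sigma\bar\sigma^\intercal$: since $\bar\sigma=\nabla\psi$ and $\mathbf{I}=\nabla^2\psi$ with $\psi(\bar\xi)=\log(1+\sum_s e^{\bar\xi_s})$, the Jacobian of $\bar\xi\mapsto\bar\sigma(\bar\xi)$ is $\mathbf{I}(\bar\xi)$ itself, hence differentiating along $\bar\ell$ replaces $\bar\sigma$ by $\mathbf{I}(\bar\xi)\bar\ell=\text{Diag}(\bar\sigma)(\bar\ell-(\bar\sigma^\intercal\bar\ell)\mathbf{1}_{S-1})=\mathbf{D}\bar\sigma$, and therefore $D_{\bar\ell}\mathbf{I}(\bar\xi)=\text{Diag}(\mathbf{D}\bar\sigma)-(\mathbf{D}\bar\sigma)\bar\sigma^\intercal-\bar\sigma(\mathbf{D}\bar\sigma)^\intercal=\text{Diag}(\bar\sigma)\mathbf{D}-\mathbf{D}\bar\sigma\bar\sigma^\intercal-\bar\sigma\bar\sigma^\intercal\mathbf{D}=2\,\mathbf{H}$. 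Thus $\mathbf{G}=\mathbf{H}$ and the $(\bar\xi,\bar\xi)$‑block of $\widetilde\nabla^2\bar F$ is $2\mathbf{H}-\mathbf{G}=\mathbf{H}$, which is the claim. (In passing, this also re‑derives the Euclidean block, since $\partial_{\bar\xi_s}[\mathbf{I}]_{i,j}=\partial_{\bar\xi_i}\partial_{\bar\xi_j}\partial_{\bar\xi_s}\psi$ is totally symmetric, so the Euclidean block equals $D_{\bar\ell}\mathbf{I}(\bar\xi)=2\mathbf{H}$ as well.)

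I expect the one genuinely delicate point to be the factor $\tfrac12$ between $\mathbf{G}$ and the Euclidean block: it is invisible if one mechanically substitutes the closed‑form $\Gamma^k_{i,j}$ and contracts term by term, and it is produced precisely by the cancellation $\mathbf{I}(\bar\xi)^{-1}\mathbf{I}(\bar\xi)=I_{S-1}$ above — so the computation should be organised around that cancellation rather than around brute‑force expansion. A secondary bookkeeping hazard is that here $\bar\sigma^\intercal\mathbf{1}_{S-1}=1-\sigma_S(\xi)\neq 1$, so the ``centering'' scalar $\bar\sigma^\intercal\bar\ell$ recurs throughout ($\mathbf{D}$, $\mathbf{I}(\bar\xi)\bar\ell$, $D_{\bar\ell}\mathbf{I}$) and must be carried symbolically and checked to recombine into $\mathbf{D}$, not dropped.
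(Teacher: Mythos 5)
Your proposal is correct and follows essentially the same route as the paper: it reduces everything to the identity $[\mathbf{T}(\bar\sigma)\times_2\bar\ell]\,\mathbf{I}(\bar\xi) = 2\bigl[\sum_k\Gamma^k_{i,j}\,\partial\bar F/\partial\bar\xi_k\bigr]_{i,j} = 2\mathbf{H}$, which is exactly the observation the paper's one-line proof invokes, and your detailed verification of both halves (the closed form $\mathbf{D}-\bar\sigma\bar\ell^\intercal$ for the contraction, and the $\mathbf{I}^{-1}\mathbf{I}$ cancellation giving $\mathbf{G}=\tfrac12 D_{\bar\ell}\mathbf{I}=\mathbf{H}$) checks out. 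The only difference is presentational: you obtain the Christoffel correction from the first-kind symbols rather than by substituting the closed-form $\Gamma^k_{i,j}$ of Proposition \ref{prop:Christoffel}, which makes the factor $\tfrac12$ transparent.
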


\begin{proof}
    By combining Eqs. \ref{eq:EuclHess} and \ref{eq:RiemHess} with Proposition \ref{prop:Christoffel} and noticing that
    \begin{equation}
        [\mathbf{T}(\bar \sigma(\bar \xi)) \times_2 \bar \ell(x) ] \mathbf{I}(\bar \xi) = 2 \begin{bmatrix}
            \sum_k \Gamma^k_{i,j}(\bar \xi) \frac{\partial \bar F}{\partial \bar \xi_k}(x,\bar \xi)
        \end{bmatrix}_{i,j}
        = 2 \mathbf{H}(x,\bar\xi) .
    \end{equation}
\end{proof}

\begin{remark}[Inertia and saddle point]
\label{rmk:eigen}
Proposition \ref{prop:RiemHess} shows that the Riemannian Hessian of $\bar F$ is in general not positive semi-definite (by using Haynsworth inertia additivity formula if at least one $\ell_s$ satisfies $\nabla^2 \ell_s \succ \mathbf{0}$\footnote{Which implies that $\ell_s$ is strictly convex (see section 3.1.4 in \cite{boyd2004convex}) and $\mathbf{B}_1 \succ \mathbf{0}$ by Weyl's inequality.}) and hence that $\bar F$ is not necessarily geodesically convex (see \cite{udriste2013convex} or Theorem 11.23 in \cite{boumal2023introduction}).
More precisely, $\widetilde{\nabla}^2 \bar F(x,\bar \xi)$ is congruent to (and thus, by Sylvester's law of inertia, has the same inertia as) the block-diagonal matrix $\text{Diag}(\mathbf{B}_1,\mathbf{B}_2)$ with positive definite matrix $\mathbf{B}_1=\sum_s \sigma_s(\xi) \nabla^2 \ell_s(x)$ and its Schur complement $\mathbf{B}_2 = \mathbf{H}(x,\bar \xi) - \mathbf{I}(\bar \xi) J_{\bar \ell}(x) \mathbf{B}_1^{-1} J_{\bar \ell}(x)^\intercal \mathbf{I}(\bar \xi)$.
In particular, for $(x,\bar\xi)=(x^*,\bar\xi^*)$ a critical point of $\bar F$ we have $\mathbf{H}(x^*,\bar\xi^*)=\mathbf{0}$ and $\mathbf{B}_2 \preceq \mathbf{0}$ from which we deduce that $(x^*,\bar\xi^*)$ must be a saddle point.
\end{remark}





\section{Barygradient flows}

\subsection{Barygradient min-max flow}

Now let us generalize the gradient flow ordinary differential equation (ODE) by letting $\lambda \rightarrow 0$ in our generalized PPA.

\begin{definition}
Let $F(x,q) = q^\intercal \ell(x)$. We define the barygradient min-max flow ODE as
\begin{equation*}
\dot \zeta(t) = - \begin{pmatrix}
I_m & 0 \\
0 & -I_S
\end{pmatrix} \nabla F( (\nabla f)^{-1}( \zeta(t) -\log(\sum_s e^{\xi_s(t)-1}) \begin{pmatrix} \mathbf{0}_m \\ \mathbf{1}_S \end{pmatrix} ) ) + \gamma(t) \begin{pmatrix} \mathbf{0}_m \\ \mathbf{1}_S \end{pmatrix} ,
\end{equation*}
where $\zeta = (x,\xi) : \mathbb{R}_+ \rightarrow \mathbb{R}^m \times \mathbb{R}^S$ and
\begin{equation*}
\gamma(t) = \frac{\sum_s [ \dot \xi_s(t) - \ell_s(x(t)) ] e^{\xi_s(t)-1}}{\sum_s e^{\xi_s(t)-1}} = q(t)^\intercal [ \dot \xi(t) - \ell(x(t)) ]
\end{equation*}
with $q(t)=\sigma(\xi(t))$.
\end{definition}

We point out that 
\begin{equation*}
\begin{pmatrix}
I_m & 0 \\
0 & -I_S
\end{pmatrix} \nabla F( (\nabla f)^{-1}( \zeta(t) -\log(\sum_s e^{\xi_s(t)-1}) \begin{pmatrix} \mathbf{0}_m \\ \mathbf{1}_S \end{pmatrix}) ) = A(x(t), q(t)) .
\end{equation*}

\paragraph{Monotonicity analysis}

Contrary to classic gradient flow, the function $F(x(t),q(t))$ is not necessarily nonincreasing along the flow.
Indeed,
\begin{multline*}
\frac{d}{dt}F( (\nabla f)^{-1}( \zeta(t) -\log(\sum_s e^{\xi_s(t)-1}) \begin{pmatrix} \mathbf{0}_m \\ \mathbf{1}_S \end{pmatrix} ) ) =\\ \frac{d}{dt}[(\nabla h)^{-1}(\xi(t)-\log(\sum_s e^{\xi_s(t)-1}) \mathbf{1}_S)]^\intercal \ell(x(t)) + q(t)^\intercal \frac{d}{dt} \ell(x(t)) ,
\end{multline*}
where 
\begin{equation*}
\frac{d}{dt}[(\nabla h)^{-1}(\xi(t)-\log(\sum_s e^{\xi_s(t)-1}) \mathbf{1}_S)] = [\nabla^2 h(q(t))]^{-1} \dot \xi(t) - \frac{\sum_s \dot \xi_s(t) e^{\xi_s(t)-1}}{\sum_s e^{\xi_s(t)-1}} q(t)
\end{equation*}
and $\frac{d}{dt} \ell(x(t)) = J_\ell(x(t)) \dot x(t)$.
Hence,
\begin{equation*}
\frac{d}{dt}F(x(t),q(t)) = \underbrace{ \ell(x(t))^\intercal [\nabla^2 h(q(t))]^{-1} \ell(x(t)) - F(x(t),q(t))^2 }_{\text{Var}_{\tau \sim q(t)}(\ell_\tau(x(t)))} - ||J_\ell(x(t))^\intercal q(t)||^2 ,
\end{equation*}
which is not necessarily nonpositive.

\paragraph{Entropy dynamics}

Denote $\chi(t) = h(q(t))$. Then,
\begin{multline*}
\dot \chi(t) = \dot q(t)^\intercal \nabla h(q(t))
= \{ [\nabla^2 h(q(t))]^{-1} \dot \xi(t) - [q(t)^\intercal \dot \xi(t)] q(t) \}^\intercal \{\xi(t)-\log(\sum_s e^{\xi_s(t)-1}) \mathbf{1}_S\} \\ = \xi(t)^\intercal \underbrace{[ \text{Diag}(q(t))-q(t)q(t)^\intercal ]}_{\text{Cov}(q(t))} \ell(x(t)) ,
\end{multline*}
where $\text{Cov}(q(t))$ denotes the covariance matrix\footnote{$\text{Cov}(q(t))$ is also the Jacobian matrix of $\sigma$ evaluated at $\xi(t)$.} of the categorical distribution $q(t)$.

\begin{remark}
The barygradient flow can be equivalently rewritten as the following pseudo-Riemannian flow:
\begin{equation*}
\dot \zeta(t) = - \begin{pmatrix}
I_m & 0 \\
0 & -\text{Cov}(q(t))^\dagger
\end{pmatrix} \nabla \tilde F( \zeta(t) ) + [ \gamma(t) + \frac{\mathbf{1}_S^\intercal \ell(x(t))}{S} ] \begin{pmatrix} \mathbf{0}_m \\ \mathbf{1}_S \end{pmatrix} ,
\end{equation*}
where $\dagger$ denotes the Moore–Penrose pseudoinverse (see \cite{meyer1973generalized}) and
\begin{equation*}
\tilde F(x,\xi) = \sigma(\xi)^\intercal \ell(x) .
\end{equation*}
\end{remark}

\subsection{Barygradient min-min flow}

Similarly, we define the barygradient min-min flow as follows.

\begin{definition}
Let $F(x,q) = q^\intercal \ell(x)$. We define the barygradient min-min flow ODE as
\begin{multline*}
\dot \zeta(t) = - \nabla F( (\nabla f)^{-1}( \zeta(t) -\log(\sum_s e^{\xi_s(t)-1}) \begin{pmatrix} \mathbf{0}_m \\ \mathbf{1}_S \end{pmatrix} ) ) + \gamma(t) \begin{pmatrix} \mathbf{0}_m \\ \mathbf{1}_S \end{pmatrix} \\
= - \begin{pmatrix}
I_m & 0 \\
0 & \text{Cov}(q(t))^\dagger
\end{pmatrix} \nabla \tilde F( \zeta(t) ) + [ \gamma(t) - \frac{\mathbf{1}_S^\intercal \ell(x(t))}{S} ] \begin{pmatrix} \mathbf{0}_m \\ \mathbf{1}_S \end{pmatrix} ,
\end{multline*}
where
\begin{equation*}
\gamma(t) = q(t)^\intercal [ \dot \xi(t) + \ell(x(t)) ] .
\end{equation*}
\end{definition}

As in the min-max case, we can study the dynamics of $F(x(t),q(t))$ and the negentropy $h(q(t))$.
Indeed, we have:
\begin{equation*}
\frac{d}{dt}F(x(t),q(t)) = - \underbrace{\text{Var}_{\tau \sim q(t)}(\ell_\tau(x(t))) }_{\ell(x(t))^\intercal \text{Cov}(q(t)) \ell(x(t))} - ||J_\ell(x(t))^\intercal q(t)||^2 \le 0 ,
\end{equation*}
which shows that $F(x(t),q(t))$ is nonincreasing.
For the negentropy we have:
\begin{equation*}
\frac{d}{dt} h(q(t)) = - \xi(t)^\intercal \text{Cov}(q(t)) \ell(x(t)) .
\end{equation*}

\newpage

\subsubsection*{Acknowledgments}
The author thanks Adil Salim and Massil Achab for helpful discussions on convex analysis.

\bibliography{bib}

\begin{thebibliography}{19}
\providecommand{\natexlab}[1]{#1}
\providecommand{\url}[1]{\texttt{#1}}
\expandafter\ifx\csname urlstyle\endcsname\relax
  \providecommand{\doi}[1]{doi: #1}\else
  \providecommand{\doi}{doi: \begingroup \urlstyle{rm}\Url}\fi

\bibitem[Achab(2022)]{achab2022checkered}
Mastane Achab.
\newblock Checkered regression.
\newblock 2022.

\bibitem[Achab(2023)]{achab2023beyond}
Mastane Achab.
\newblock Beyond log-concavity: Theory and algorithm for sum-log-concave optimization.
\newblock \emph{arXiv preprint arXiv:2309.15298}, 2023.

\bibitem[Amari(1998)]{amari1998natural}
Shun-Ichi Amari.
\newblock Natural gradient works efficiently in learning.
\newblock \emph{Neural computation}, 10\penalty0 (2):\penalty0 251--276, 1998.

\bibitem[Amari(2012)]{amari2012differential}
Shun-ichi Amari.
\newblock \emph{Differential-geometrical methods in statistics}, volume~28.
\newblock Springer Science \& Business Media, 2012.

\bibitem[Amari(2016)]{amari2016information}
Shun-ichi Amari.
\newblock \emph{Information geometry and its applications}, volume 194.
\newblock Springer, 2016.

\bibitem[Bauschke and Combettes(2011)]{bauschkeconvex}
Heinz~H Bauschke and Patrick~L Combettes.
\newblock Convex analysis and monotone operator theory in hilbert spaces.
\newblock 2011.

\bibitem[Bauschke et~al.(2003)Bauschke, Borwein, and Combettes]{bauschke2003bregman}
Heinz~H Bauschke, Jonathan~M Borwein, and Patrick~L Combettes.
\newblock Bregman monotone optimization algorithms.
\newblock \emph{SIAM Journal on control and optimization}, 42\penalty0 (2):\penalty0 596--636, 2003.

\bibitem[Borwein et~al.(2011)Borwein, Reich, and Sabach]{borwein2011characterization}
Jonathan~M Borwein, Simeon Reich, and Shoham Sabach.
\newblock A characterization of bregman firmly nonexpansive operators using a new monotonicity concept.
\newblock \emph{J. Nonlinear and Convex Analysis}, 12:\penalty0 161--183, 2011.

\bibitem[Boumal(2023)]{boumal2023introduction}
Nicolas Boumal.
\newblock \emph{An introduction to optimization on smooth manifolds}.
\newblock Cambridge University Press, 2023.

\bibitem[Boyd and Vandenberghe(2004)]{boyd2004convex}
Stephen Boyd and Lieven Vandenberghe.
\newblock \emph{Convex optimization}.
\newblock Cambridge university press, 2004.

\bibitem[Broh{\'e} and Tossings(2000)]{brohe2000perturbed}
Myrana Broh{\'e} and Patricia Tossings.
\newblock Perturbed proximal point algorithm with nonquadratic kernel.
\newblock \emph{Serdica Math. J}, 26:\penalty0 177--206, 2000.

\bibitem[Cesa-Bianchi and Lugosi(2006)]{cesa2006prediction}
Nicolo Cesa-Bianchi and G{\'a}bor Lugosi.
\newblock \emph{Prediction, learning, and games}.
\newblock Cambridge university press, 2006.

\bibitem[Eckstein(1993)]{eckstein1993nonlinear}
Jonathan Eckstein.
\newblock Nonlinear proximal point algorithms using bregman functions, with applications to convex programming.
\newblock \emph{Mathematics of Operations Research}, 18\penalty0 (1):\penalty0 202--226, 1993.

\bibitem[Kullback and Leibler(1951)]{kullback1951information}
Solomon Kullback and Richard~A Leibler.
\newblock On information and sufficiency.
\newblock \emph{The annals of mathematical statistics}, 22\penalty0 (1):\penalty0 79--86, 1951.

\bibitem[Meyer(1973)]{meyer1973generalized}
Carl~D Meyer, Jr.
\newblock Generalized inversion of modified matrices.
\newblock \emph{Siam journal on applied mathematics}, 24\penalty0 (3):\penalty0 315--323, 1973.

\bibitem[Rao(1992)]{rao1992information}
C~Radhakrishna Rao.
\newblock Information and the accuracy attainable in the estimation of statistical parameters.
\newblock In \emph{Breakthroughs in Statistics: Foundations and basic theory}, pages 235--247. Springer, 1992.

\bibitem[Rockafellar(1976)]{rockafellar1976monotone}
R~Tyrrell Rockafellar.
\newblock Monotone operators and the proximal point algorithm.
\newblock \emph{SIAM journal on control and optimization}, 14\penalty0 (5):\penalty0 877--898, 1976.

\bibitem[Udriste(2013)]{udriste2013convex}
Constantin Udriste.
\newblock \emph{Convex functions and optimization methods on Riemannian manifolds}, volume 297.
\newblock Springer Science \& Business Media, 2013.

\bibitem[Weisstein(2003)]{weisstein2003christoffel}
Eric~W Weisstein.
\newblock Christoffel symbol of the second kind.
\newblock \emph{https://mathworld. wolfram. com/}, 2003.

\end{thebibliography}

\end{document}